%% filename: amsart-template.tex
%% version: 1.1
%% date: 2014/07/24
%%
%% American Mathematical Society
%% Technical Support
%% Publications Technical Group
%% 201 Charles Street
%% Providence, RI 02904
%% USA
%% tel: (401) 455-4080
%%      (800) 321-4267 (USA and Canada only)
%% fax: (401) 331-3842
%% email: tech-support@ams.org
%% 
%% Copyright 2008-2010, 2014 American Mathematical Society.
%% 
%% This work may be distributed and/or modified under the
%% conditions of the LaTeX Project Public License, either version 1.3c
%% of this license or (at your option) any later version.
%% The latest version of this license is in
%%   http://www.latex-project.org/lppl.txt
%% and version 1.3c or later is part of all distributions of LaTeX
%% version 2005/12/01 or later.
%% 
%% This work has the LPPL maintenance status `maintained'.
%% 
%% The Current Maintainer of this work is the American Mathematical
%% Society.
%%
%% ====================================================================

%     AMS-LaTeX v.2 template for use with amsart
%
%     Remove any commented or uncommented macros you do not use.

\documentclass{amsart}

\usepackage{mathtools}
\usepackage{amsmath}
\usepackage{amssymb}
\usepackage{amsthm}
\usepackage{enumitem}
\usepackage{lastpage}
\usepackage{fancyhdr}
\usepackage{accents}
\usepackage{scrextend}
\usepackage{tikz-cd}
\usepackage{graphicx}
\graphicspath{ {./images/} }

\usepackage[bb=dsserif]{mathalpha}
\usepackage{bm}

\usepackage{xr-hyper}
\usepackage{hyperref}

\newtheorem{theorem}{Theorem}[section]
\newtheorem{lemma}[theorem]{Lemma}
\newtheorem{corollary}[theorem]{Corollary}
\newtheorem{proposition}[theorem]{Proposition}

\theoremstyle{definition}
\newtheorem{definition}[theorem]{Definition}

\newtheorem{remark}[theorem]{Remark}
\numberwithin{equation}{section}

\newcommand{\Z}{\mathbb{Z}}

\newcommand{\HP}{\mathbb{H}P}

\newcommand{\Spinc}{\text{Spin$^c$ }}
\newcommand{\Spinh}{\text{Spin$^h$ }}
\newcommand{\spinc}{\text{spin$^c$ }}
\newcommand{\spinh}{\text{spin$^h$ }}
\newcommand{\Spin}{\text{Spin}}
\newcommand{\Sp}{\text{Sp}}
\newcommand{\Ahath}{\hat{\mathcal{A}}^h}

\DeclareMathOperator{\rk}{rank}
\DeclareMathOperator{\id}{id}

\DeclareMathOperator{\Sq}{Sq}

\begin{document}

\title{The Structure of the Spin$^h$ Bordism Spectrum}

%author information
\author{Keith Mills}
\address{University of Maryland, College Park}
\curraddr{}
\email{kmills96@umd.edu}

\subjclass[2020]{Primary 55N22. Secondary 55T15, 55S10.}
\keywords{}

\date{December 8, 2023}

%% =====================================================================

\begin{abstract}
Spin$^h$ manifolds are the quaternionic analogue to \spinc manifolds. We compute the \spinh bordism groups at the prime $2$ by proving a structure theorem for the cohomology of the \spinh bordism spectrum $\mathrm{MSpin^h}$ as a module over the mod 2 Steenrod algebra. This provides a 2-local splitting of $\mathrm{MSpin^h}$ as a wedge sum of familiar spectra. We also compute the decomposition of $H^*(\mathrm{MSpin^h};\mathbb{Z}/2\mathbb{Z})$ explicitly in degrees up through 30 via a counting process.
\end{abstract}

\maketitle

\section{Introduction} \label{sec:intro}

Spin$^h$ manifolds are the quaternionic analogue to \spinc manifolds. $\Spin^h(n)$ is a central extension of $SO(n) \times \Sp(1)$ by $\Z_2 = \mathbb{Z}/2\mathbb{Z}$, and a \spinh structure on an oriented $n$-manifold is a lifting of the principal frame bundle from $SO(n)$ to $\Spin^h(n)$. We aim to compute the \spinh bordism groups $\Omega_{*}^{spin^h}.$ As explained in (a section of) Jiahao Hu's thesis \cite{Hu}, there is a \spinh bordism spectrum $\mathrm{MSpin^h}$, so computing the bordism groups is equivalent to determining the homotopy groups of this spectrum.

\Spinh manifolds have been the subject of recent research of various flavors. In \cite{albanesemilivojevic2021spinh} it is shown that there is a notion of $\Spin^k$ for any integer $k \geq 1$, and \spinh is the case of this for $k=3$. More geometrically, Hu shows in his thesis \cite{Hu} that \spinh manifolds carry Dirac operators with indices in symplectic $K$-theory and that there is a symplectic version of the Atiyah-Singer index theorem. They also show up as a special case of the work of Freed and Hopkins on quantum field theories and Thom spectra in \cite{freedhopkins2021refl}. One can find a more detailed exposition on \spinh manifolds and consequences of their study in Lawson's article \cite{lawson2023spinh}.

It turns out that with $2$ inverted, the \spinh bordism groups are determined by the spin bordism groups. Precisely, one has $\Omega_*^{spin^h}[\frac{1}{2}] \cong \Omega_*^{spin} \otimes_{\Z} H_*(\HP^\infty;\Z[\frac{1}{2}])$ (\cite{Hu}, A.0.2), so it suffices to concern ourselves with computing the $2$-primary part of the \spinh bordism groups. We also mention that Hu provides the \spinh bordism groups in degrees 0 through 6 as $\Z, 0, 0, 0, \Z \oplus \Z, \Z_2 \oplus \Z_2, \Z_2 \oplus \Z_2$; this is stated without proof but is a consequence of our main result.

The standard approach for computing the $2$-primary part of $\pi_*(\mathrm{MSpin^h})$ is to use the mod 2 Adams spectral sequence, which takes as input $H^*(\mathrm{MSpin^h};\Z_2)$ as a module over the mod 2 Steenrod algebra $A$. Consequently, the computation of this $A$-module becomes the central focus of our work.

For the classical computations for spin and \spinc bordism one leverages the fact that the corresponding spectra are ring spectra. Unfortunately, unlike $\mathrm{MSpin}$ and $\mathrm{MSpin}^c$, $\mathrm{MSpin^h}$ is not a ring spectrum since the product of two \spinh manifolds may not be spin$^h$. However, the product of a spin and \spinh manifold is spin$^h$, which does buy us some extra structure: $\mathrm{MSpin^h}$ is an $\mathrm{MSpin}$-module spectrum. $\mathrm{MSpin}$-module spectra are studied by Stolz in \cite{stolz1992scpsc} and \cite{stolz1994splitting}.

A key implication of Stolz's work is that $H^*\mathrm{MSpin^h}$ (coefficient groups $\Z_2$ are omitted) is an ``extended" $A$-module, meaning that it is determined by the $A(1)$-module structure of a certain submodule $\overline{H^*\mathrm{MSpin^h}}$ of $H^*\mathrm{MSpin^h}$, where $A(1)$ is the subalgebra of $A$ generated by $\Sq^1$ and $\Sq^2$. Explicitly, it is determined via the $A$-module isomorphism $H^*\mathrm{MSpin^h} \cong A \otimes_{A(1)} \overline{H^*\mathrm{MSpin^h}}$. This is analogous to the situation for spin bordism, where it is the case that $H^*\mathrm{MSpin} \cong A \otimes_{A(1)} N$ for a certain $A(1)$-module $N$, which is computed by Anderson, Brown, and Peterson in \cite{abp1967spin}.

It turns out that $\mathrm{MSpin^h} \simeq \mathrm{MSpin} \wedge \Sigma^{-3}MSO_3$ as $\mathrm{MSpin}$-module spectra, which has the consequence that $\overline{H^*\mathrm{MSpin^h}} \cong N \otimes_{\Z_2} H^*(\Sigma^{-3}MSO_3)$ as $A(1)$-modules. As such, the determination of $\overline{H^*\mathrm{MSpin^h}}$ relies on understanding the $A(1)$-module structure of these two factors; the first, $N$, is very well-known, and it so happens that the second is easy to compute.

Our main result is a combination of Theorem \ref{thm:mspinhstructure} and Corollary \ref{cor:mspinhhtpy}, which is a structure theorem for $\mathrm{MSpin^h}$ and states that 2-locally it splits up as a generalized wedge sum of familiar spectra with known homotopy groups. While we are unable to provide a closed-form expression for these homotopy groups in each degree (a consequence of the fact that there is not one for $\mathrm{MSpin}$) we are able to provide a counting process that allows one to compute any particular homotopy group of interest.

To state the main result, fix the following notation. Let $I$ be the augmentation ideal of $A(1)$, which corresponds\footnote{When we say that a spectrum $Y$ ``corresponds" to an $A(1)$-module $M$, we mean that $\overline{H^*Y} = M$; the definition of $\overline{H^*Y}$ in general will be given in section \ref{sec:prelims}. For $\mathrm{ksp}$ and $H\Z_2$ this correspondence will be clear, and it is possible to show that $\Sigma\widetilde{\mathrm{ko}}$ is the cofiber of the map $\mathrm{ko} \to H\Z_2$ given by the nontrivial element of $H^0\mathrm{ko}$.} to a spectrum $E$ that has the same homotopy groups as the connective cover of the real $K$-theory spectrum $\mathrm{ko}$ except for a degree shift. We let $\widetilde{\mathrm{ko}}$ be $\Sigma^{-1}E$ (corresponding to $\Sigma^{-1} I$) so that its homotopy groups are the same as those for $\mathrm{ko}$. Let $K$ be the quotient of $A(1)$ by $(A(1)\Sq^1 + A(1)\Sq^{2,1,2})$, which corresponds to the spectrum $\mathrm{ksp} = \Sigma^{-4}\mathrm{ko}\langle 4 \rangle$, where $\mathrm{ko}\langle 4 \rangle$ is the 3-connected cover of $\mathrm{ko}$. Then our main result is

\begin{theorem}\label{thm:mainthm}
    The $A(1)$-module $\overline{H^*\mathrm{MSpin^h}}$ is a direct sum of suspensions of $A(1)$, $I$, and $K$. Hence the Adams spectral sequence implies that $\mathrm{MSpin^h}$ has the 2-local homotopy type of a generalized wedge sum of copies of the Eilenberg-MacLane spectrum $H\Z_2$, $\widetilde{\mathrm{ko}}$, and $\mathrm{ksp}$.
\end{theorem}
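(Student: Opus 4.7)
The plan is to reduce the theorem to a purely algebraic question about $A(1)$-modules and then invoke Stolz's splitting machinery. As noted in the introduction, the equivalence $\mathrm{MSpin^h} \simeq \mathrm{MSpin} \wedge \Sigma^{-3}MSO_3$ of $\mathrm{MSpin}$-module spectra yields an $A(1)$-module isomorphism
\[
    \overline{H^*\mathrm{MSpin^h}} \;\cong\; N \otimes_{\Z_2} H^*(\Sigma^{-3}MSO_3),
\]
where $N$ is the $A(1)$-module computed by Anderson--Brown--Peterson \cite{abp1967spin} and already known to split as a direct sum of suspensions of a short list of indecomposable $A(1)$-modules. The theorem thus reduces to two algebraic tasks: compute $H^*(\Sigma^{-3}MSO_3)$ as an $A(1)$-module, and decompose the resulting tensor product into suspensions of $A(1)$, $I$, and $K$.

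First I would determine $H^*(\Sigma^{-3}MSO_3)$ as an $A(1)$-module. The Thom isomorphism identifies $H^*MSO_3$ with $\Z_2[w_2,w_3]\cdot U$, and the Steenrod action is pinned down by $\Sq^1 U = 0$ (since the bundle is oriented), $\Sq^2 U = w_2 U$, and the Wu formulas applied to the Stiefel--Whitney classes. After desuspending by three, I would decompose the result into indecomposable $A(1)$-summands.

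Next I would carry out the tensor-product decomposition, summand by summand. Since $A(1) \otimes_{\Z_2} M$ is $A(1)$-free for any finite $M$, the free summands of $N$ contribute only suspensions of $A(1)$; the substantive computations involve the non-free summands of $N$ tensored with each summand of $H^*(\Sigma^{-3}MSO_3)$. In each such case one tracks the diagonal action of $\Sq^1$ and $\Sq^2$, identifies generators and relations in the resulting module, and matches indecomposables against $A(1)$, $I$, and $K$. This case analysis is the step I expect to be the main obstacle: a priori many other indecomposable $A(1)$-modules could appear, and ruling them out amounts to verifying that the defining relations of $A(1)$, $I$, or $K$ hold on the nose for each candidate generator. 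In particular, the appearance of $K$ (rather than some other cyclic quotient of $A(1)$) must be traced to the specific vanishing of $\Sq^1$ and $\Sq^{2,1,2}$ on certain diagonal classes.

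Finally, with the $A(1)$-module splitting of $\overline{H^*\mathrm{MSpin^h}}$ into suspensions of $A(1)$, $I$, and $K$ established, Stolz's theorem on $\mathrm{MSpin}$-module spectra \cite{stolz1994splitting} upgrades the algebraic splitting to a 2-local splitting of $\mathrm{MSpin^h}$ itself. Combining this with the stated correspondence between the modules $A(1)$, $I$, $K$ and the spectra $H\Z_2$, $\widetilde{\mathrm{ko}}$, $\mathrm{ksp}$ gives the structure theorem, and the homotopy-group corollary then follows from the known $\pi_*$ of each wedge factor.
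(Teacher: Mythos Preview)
Your proposal is correct and follows essentially the same route as the paper: reduce via $\overline{H^*\mathrm{MSpin^h}}\cong N\otimes H^*(\Sigma^{-3}MSO_3)$, decompose each factor (the paper records $N$ as suspensions of $A(1)$, $\Z_2$, $J$ and $H^*MSO_3$ as suspensions of $A(1)$, $I$, $K$), and then check the pairwise tensors. Your anticipated ``main obstacle'' is in fact small---only $J\otimes K$ and $J\otimes I$ are nontrivial, and the paper dispatches both by a short explicit computation (with Adams--Priddy as a backstop) before invoking Stolz exactly as you describe.
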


We also compute this decomposition in degrees $\leq 30$, the first few terms of which are
\[
\mathrm{MSpin^h} \simeq_{2} \mathrm{ksp} \vee \Sigma^4\widetilde{\mathrm{ko}} \vee 2(\Sigma^8 \mathrm{ksp}) \vee \Sigma^9 H\Z_2 \vee \Sigma^{10}H\Z_2 \vee 3(\Sigma^{12}\widetilde{\mathrm{ko}}) \vee \Sigma^{14}H\Z_2 \vee \cdots
\]
\noindent (an integer $n$ in front of a particular wedge summand means $n$ copies of that summand).

The paper is structured as follows. In section \ref{sec:prelims} we provide a short exposition on \spinh manifolds and describe the cohomology of the classifying space for principal \Spinh bundles, and then discuss the results of Stolz's work on $\mathrm{MSpin}$-module spectra that are relevant for our computations.

In section \ref{sec:spinh_structure} we prove that as an $\mathrm{MSpin}$-module spectrum, $\mathrm{MSpin^h}$ is equivalent to $\mathrm{MSpin} \wedge \Sigma^{-3}MSO_3$, and then describe the structure results for the $A(1)$-modules $N = \overline{H^*\mathrm{MSpin}}$ and $H^*MSO_3$; each of these has a decomposition into smaller well-studied $A(1)$-modules, so following this we compute the pairwise tensor products necessary to understand $\overline{H^*\mathrm{MSpin^h}} \cong N \otimes \Sigma^{-3}H^*MSO_3$. From here, the main structure theorem \ref{thm:mainthm} follows.

In section \ref{sec:counting} we describe the counting process that provides the explicit decomposition mentioned above, which is essentially the combination of locating each module in the decompositions of $N$ and $H^*MSO_3$ and then computing the resulting tensor product, recording where each piece ends up. We end in section \ref{sec:more} with a short discussion of a map $\Ahath: \mathrm{MSpin^h} \to \mathrm{ksp}$ which generalizes the spin orientation $\mathcal{A}: \mathrm{MSpin} \to \mathrm{ko}$, and from our decomposition it follows that this map is projection onto the bottom wedge summand of $\mathrm{MSpin^h}$.

\section*{Acknowledgments} \label{sec:acknowledgments}

I would like to thank Jonathan Rosenberg for his assistance and encouragement at all stages in the preparation of this paper. I would also like to thank Jiahao Hu for several conversations on the topic of \spinh manifolds and for suggesting a sketch of Lemma \ref{lem:mspinh_smashsplit2}, and the reviewer for improvements on an earlier version of the paper.

\section{Preliminaries} \label{sec:prelims}

\subsection{\Spinh Manifolds} \label{subsec:spinh_mflds}

In this subsection we review the requisite background and information on \spinh manifolds. A more detailed account can be found in \cite{albanesemilivojevic2021spinh}. For convenience, we assume all manifolds under discussion are smooth, compact, and oriented.

Recall that $\Spin(n)$ is the universal (double) cover of $SO(n)$. Consider the ``diagonal" $\Z_2$ subgroup of $\Spin(n) \times \Sp(1)$ generated by $(-1,-1)$. Taking the quotient by this subgroup gives the group $\Spin^h(n) \coloneqq \Spin(n) \times_{\Z_2} \Sp(1)$. One can verify that $\Spin^h(n)$ sits in an exact sequence of groups
\[
1 \to \Z_2 \to \Spin^h(n) \to SO(n) \times SO(3) \to 1.
\]

The $h$ is used to emphasize the role of the quaternions, just as the $c$ in \Spinc is used to highlight the role of the complex numbers. The homomorphism $\Spin^h(n) \to SO(n) \times SO(3)$ is a double covering, and composing with projections we have natural maps $\Spin^h(n) \to SO(n)$ and $\Spin^h(n) \to SO(3)$.

We are ready to state our definitions of interest.

\begin{definition} \label{def:spinh_vect}
A \textit{\spinh structure} on a principal $SO(n)$-bundle $P$ is a principal $SO(3)$-bundle $E$ together with a principal $\Spin^h(n)$-bundle $Q$ and a double covering $Q \to P \times E$ that is equivariant with respect to $\Spin^h(n) \to SO(n) \times SO(3)$.
\end{definition}

\begin{definition} \label{def:spinh_mfld_bordism}
A manifold $M$ is a \spinh manifold if its tangent bundle admits a spin$^h$ structure. \Spinh manifolds with boundary and bordism of \spinh manifolds are defined in the usual way.
\end{definition}

It is shown in \cite{albanesemilivojevic2021spinh} that the first obstruction to the existence of a \spinh structure on an oriented manifold $M$ is the fifth integral Stiefel-Whitney class $W_5$, but that this is not the only obstruction, in contrast with spin and \spinc structures on manifolds where $w_2$ and $W_3$ are the only obstructions (resp.). This is a consequence of the fact that the homotopy fiber of the map $BSpin^h(n) \to BSO(n)$ is not an Eilenberg-MacLane space, where $BSpin^h(n)$ denotes the classifying space for \spinh vector bundles of rank $n$.

Also in contrast to the situation for spin and \spinc manifolds is the fact that the product of two \spinh manifolds need not be spin$^h$: for example, the product $W \times W$ of the Wu manifold $W = SU(3)/SO(3)$ is not \spinh since $W_5(W \times W) \neq 0$. We do, however, have the partial result (\cite{albanesemilivojevic2021spinh}, 3.6) that if $M$ is spin, then $M \times N$ is \spinh if and only if $N$ is spin$^h$; also, if a product $M \times N$ is \spinh then both $M$ and $N$ are spin$^h$.

%% Cohomology of BSpin^h

We now record the mod 2 cohomology of the stable classifying space $BSpin^h$, computed by Hu in section 2 of \cite{Hu}. Recall that the mod $2$ cohomology of $BSO$ is generated by the universal Stiefel-Whitney classes $w_i$ for $i \geq 2$, and denote by $\nu_k$ the universal Wu class in degree $k$.

\begin{theorem} \label{thm:Hu_HBSpinh}
$H^*(BSpin^h; \Z_2)$ is generated additively by the products of the universal Stiefel-Whitney classes $w_i$ for $i \geq 2$ and $i \neq 2^{r+1}+1$ for $r \geq 1$. As an algebra, $H^*(BSpin^h;\Z_2)$ is naturally isomorphic to the quotient $H^*(BSO;\Z_2) / (\Sq^1 \nu_{2^{r+1}})_{r \geq 1}.$
\end{theorem}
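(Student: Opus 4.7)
The plan is to compute $H^*(BSpin^h; \Z_2)$ via the Serre spectral sequence of the principal $B\Z_2$-fibration
\[
K(\Z_2, 1) \longrightarrow BSpin^h \longrightarrow BSO \times BSO(3)
\]
arising from the extension $1 \to \Z_2 \to \Spin^h \to SO \times SO(3) \to 1$ recorded above. Its classifying class is $\omega := w_2 + u_2 \in H^2(BSO \times BSO(3); \Z_2)$, where $u_i$ denotes the $i$-th Stiefel-Whitney class of the universal $SO(3)$-bundle; this identification follows because the restriction of $\omega$ to either factor must be the obstruction to splitting the extension over that factor, namely $w_2$ over $BSO$ and $u_2$ over $BSO(3)$.

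I would begin with $E_2 = H^*(BSO \times BSO(3); \Z_2) \otimes \Z_2[x]$, where $|x|=1$ and $d_2(x) = \omega$. Kudo's transgression theorem, together with the identities $\Sq^1 x = x^2$ and $\Sq^{2^k}(x^{2^k}) = x^{2^{k+1}}$, yields
\[
d_{2^k+1}(x^{2^k}) = \Sq^{2^{k-1}} \cdots \Sq^2 \Sq^1 \omega \qquad (k \geq 1).
\]
These, together with $d_2(x) = \omega$, are the only differentials carrying classes off the $0$-th column; the remaining powers of $x$ are killed via the Leibniz rule. Hence $E_\infty$ concentrates on the $0$-th row, so $H^*(BSpin^h; \Z_2)$ is presented as $H^*(BSO \times BSO(3); \Z_2)/J$ where $J$ is the ideal generated by $\omega$ and its iterated transgressions.

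The first two generators $\omega = w_2 + u_2$ and $\Sq^1 \omega = w_3 + u_3$ force $u_2 = w_2$ and $u_3 = w_3$, eliminating the polynomial generators of $H^*(BSO(3); \Z_2)$ entirely. After this substitution, the remaining generators $\Sq^{2^r}\cdots \Sq^2 \Sq^1 \omega$ ($r \geq 1$) descend to classes in $H^*(BSO; \Z_2)$, and I would identify these inductively with $\Sq^1 \nu_{2^{r+1}}$. The base case $r=1$ is direct: the Wu formula gives $\Sq^2 w_3 = w_2 w_3 + w_5$ and $\Sq^2 u_3 = u_2 u_3$ (using $u_5 = 0$), so the substituted $\Sq^2 \Sq^1 \omega$ reduces to $w_5$; and from $\Sq\,\nu = w$ one obtains $\nu_4 = w_4 + w_2^2$, whence $\Sq^1 \nu_4 = w_5$. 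The inductive step uses the same identity $\Sq\,\nu = w$, the Cartan and Adem relations, and the previously established relations to rewrite $\Sq^{2^r} \cdots \Sq^2 \Sq^1 \omega$ as $\Sq^1 \nu_{2^{r+1}}$ modulo the ideal generated by the earlier terms. For the additive-generators claim, a direct expansion shows $\Sq^1 \nu_{2^{r+1}}$ contains $w_{2^{r+1}+1}$ as its unique indecomposable top-degree term, so the relation $\Sq^1 \nu_{2^{r+1}} = 0$ expresses $w_{2^{r+1}+1}$ as a polynomial in lower-index Stiefel-Whitney classes, leaving exactly the claimed generating set.

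The main obstacle is the inductive step of the third paragraph, namely the algebraic identification $\Sq^{2^r} \cdots \Sq^2 \Sq^1 \omega \equiv \Sq^1 \nu_{2^{r+1}}$ modulo earlier transgressions. This requires careful bookkeeping with Adem relations and with the recursive description of the Wu classes via $\Sq\,\nu = w$; the other steps—running the Serre spectral sequence, simplifying $J$ using the first two relations, and reading off additive generators—are essentially mechanical once this algebraic identification is in place.
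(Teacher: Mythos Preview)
The paper does not prove this theorem at all; it simply records the statement and cites Hu's thesis for the computation. So there is no in-paper argument to compare against, and your outline is being judged on its own merits.

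Your approach is the standard one and is essentially correct in shape: run the Serre spectral sequence for the principal $K(\Z_2,1)$-fibration over $BSO\times BSO(3)$ with $k$-invariant $\omega=w_2+u_2$, use Kudo's theorem to identify the transgressions of $x^{2^k}$, and then simplify the resulting ideal. Your base case $r=1$ is checked correctly. Two remarks on the execution:

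\begin{itemize}
\item The claim that ``$E_\infty$ concentrates on the $0$-th row'' is not automatic from the Leibniz rule alone; it requires that the sequence $\omega,\Sq^1\omega,\Sq^2\Sq^1\omega,\ldots$ be regular in $H^*(BSO\times BSO(3))$. This is true, and in fact follows once you have the identification with $\Sq^1\nu_{2^{r+1}}$ (each such class has $w_{2^{r+1}+1}$ as its unique indecomposable term, and together with $u_2,u_3$ these form part of a polynomial generating set). But logically you should establish the identification of the transgressions in $H^*(BSO\times BSO(3))$ first, deduce regularity, and only then conclude the collapse.
\item The inductive step you flag as the ``main obstacle'' is indeed the crux. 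A cleaner route than brute-force Adem manipulations is to note that, after imposing $u_2=w_2$ and $u_3=w_3$, the $BSO(3)$ contribution to $\Sq^{2^r}\!\cdots\Sq^1\omega$ is a polynomial in $w_2,w_3$ only, while the $BSO$ contribution is $\Sq^{2^r}\!\cdots\Sq^1 w_2$. One then checks that $\Sq^{2^r}\!\cdots\Sq^1 w_2 \equiv \Sq^1\nu_{2^{r+1}}$ modulo decomposables in $w_2,w_3$ and the earlier relations; this is a known identity in $H^*(BSO;\Z_2)$ and can be extracted from the analysis of $H^*(BSpin;\Z_2)$ (where one quotients by $w_2$ as well).
\end{itemize}

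With these two points addressed, your sketch becomes a complete proof along the same lines one would expect in Hu's original computation.
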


Note that the classes $w_{2^{r+1}+1}$ are not necessarily zero in $H^*(BSpin^h;\Z_2)$, but rather are decomposable; for example, one may compute that $w_9 = w_2w_7 + w_3w_6$.

\subsection{The spectrum $\mathrm{MSpin^h}$} \label{subsec:mspinh} 

As mentioned in the introduction, while $\mathrm{MSpin^h}$ is not a ring spectrum it is an $\mathrm{MSpin}$-module spectrum. $\mathrm{MSpin}$-module spectra are studied by Stolz in \cite{stolz1992scpsc} and \cite{stolz1994splitting}. Here we record some of his discussion and results on such spectra that will be relevant for our purposes. In what follows, suppress homology and cohomology coefficient groups by assuming they are $\Z_2$, and assume any tensor products without subscripts are over $\Z_2$.

Denote by $A$ the mod $2$ Steenrod algebra and by $A(1)$ the subalgebra of $A$ generated by $\Sq^1$ and $\Sq^2$. Let $A_*$ and $A(1)_*$ be their respective duals. Recall that $A_*$ is a polynomial algebra $\Z_2[\xi_1, \xi_2, \ldots]$ on the ``Milnor generators" $\xi_i$ of degree $2^i - 1$. Typically one works with a different set of generators given by $\zeta_i = c(\xi_i)$, where $c$ is the conjugation involution on $A_*$. In these generators one has $A(1)_* = \Z_2[\zeta_1, \zeta_2]/(\zeta_1^4, \zeta_2^2)$.

Let $Y$ be an $\mathrm{MSpin}$-module spectrum. Then $H_*Y$ is a comodule over the dual Steenrod algebra $A_*$, and information about the homotopy groups of $Y$ at the prime 2 can be obtained from the mod 2 Adams spectral sequence, which takes as input $H_*Y$ as an $A_*$-comodule. Stolz shows the following:

First, $H_*Y$ is a module over $R = H_*\mathrm{ko} = A_* \square_{A(1)_*} \Z_2 = \Z_2[\zeta_1^4, \zeta_2^2, \zeta_3, \zeta_4,\ldots]$. Here, $\mathrm{ko}$ denotes the connective real $K$-theory spectrum, and the cotensor product $A_* \square_{A(1)_*} M$ is defined for any (left) $A(1)_*$-comodule $M$ by the exact sequence below, where $\psi$ denotes the $A(1)_*$-comodule structure maps for both $A_*$ and $M$:
\[
0 \to A_* \square_{A(1)_*} M \to A_* \otimes M \xrightarrow{\psi \otimes 1 - 1 \otimes \psi} A_* \otimes A(1)_* \otimes M.
\]

Second, if $H_*Y$ is bounded below and locally finite, then there is a natural isomorphism $\Phi_Y: H_*Y \to A_* \square_{A(1)_*} \overline{H_*Y}$ compatible with the $A_*$-comodule and $R$-module structures, where $\overline{H_*Y}$ is the ``$R$-indecomposable quotient" of $H_*Y$ given by $\overline{H_*Y} \coloneqq \Z_2 \otimes_{R} H_*Y$ (\cite{stolz1992scpsc}, 5.5). One refers to this by saying that the $A_*$-comodule $H_*Y$ is an ``extended" $A(1)_*$-comodule, extended from $\overline{H_*Y}$. The map $\Phi_Y$ fits into the following commutative diagram (where it is a result of Stolz that the map $1 \otimes \pi$ factors through $A_* \square_{A(1)_*} \overline{H_*Y}$):

\begin{center}
	\begin{tikzcd}
		H_*Y \arrow{dr}{\Phi_Y} \arrow{r}{\text{coaction}} &A_* \otimes H_*Y  \arrow{d} \arrow{r}{1 \otimes \pi} &A_* \otimes \overline{H_*Y} \\
		& A_* \square_{A(1)_*} \overline{H_*Y} \arrow[hookrightarrow]{ur}{\text{incl.}}
	\end{tikzcd}
\end{center}

Now, since the homology of $Y = \mathrm{MSpin^h}$ is bounded below and locally finite, the above result applies. This means that instead of directly computing the $A_*$-comodule structure on $H_*\mathrm{MSpin^h}$, we can instead compute the $A(1)_*$-comodule $\overline{H_*\mathrm{MSpin^h}}$ and then form the required cotensor product.

Let us also explicitly describe the dual situation. There is a natural map $\Phi^Y: A \otimes_{A(1)} \overline{H^*Y} \to H^*Y$ that is an $A$-module and $H^*\mathrm{ko}$-comodule isomorphism as well as a $H^*\mathrm{MSpin}$-module map. Here, $\overline{H^*Y} = \Z_2 \square_{H^* \mathrm{ko}} H^*Y$, and $\Phi^Y$ fits into the following commutative diagram, where $\text{ev}$ refers to the $A$-module structure on $H^*Y$:

\begin{center}
	\begin{tikzcd}
	A \otimes \overline{H^*Y} \arrow{dr}{\text{quotient}} \arrow{r}{1 \otimes \text{incl.}} &A \otimes H^*Y \arrow{r}{\text{ev}} &H^*Y \\
		&A \otimes_{A(1)} \overline{H^*Y} \arrow{u} \arrow{ur}{\Phi^Y}
	\end{tikzcd}
\end{center}

\section{Structure Results on $\mathrm{MSpin^h}$} \label{sec:spinh_structure}

We wish to describe the structure of $\mathrm{MSpin^h}$. As a starting point one has the following result.

\begin{lemma} \label{lem:mspinh_smashsplit2}
There is a 2-local homotopy equivalence of spectra $\mathrm{MSpin} \wedge \Sigma^{-3} MSO_3 \to \mathrm{MSpin^h}$.
\end{lemma}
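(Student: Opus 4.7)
The plan is to construct the map from a Thom-spectrum perspective and verify it is a mod 2 homology equivalence; since both source and target are connective with homology of finite type, this will suffice. The key geometric observation is that the tautological oriented rank-$3$ bundle $\epsilon\to BSO(3)$ admits a canonical spin$^h$ structure with itself as auxiliary bundle: the Whitney sum formula gives $w(2\epsilon)=w(\epsilon)^2 = 1 + w_2(\epsilon)^2 + w_3(\epsilon)^2$, so $w_1(2\epsilon)=w_2(2\epsilon)=0$ and $2\epsilon$ is canonically spin. Viewing this as a spin$^h$ structure on the virtual rank-$0$ bundle $\epsilon - 3$ yields a classifying map $BSO(3)\to B\Spin^h$, which Thomifies to a spectrum map $\psi\colon \Sigma^{-3}MSO_3\to \mathrm{MSpin^h}$. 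Extending $\psi$ along the $\mathrm{MSpin}$-module structure on $\mathrm{MSpin^h}$ (which exists because spin $\times$ spin$^h$ is spin$^h$) yields the candidate equivalence
\[
\phi\colon \mathrm{MSpin}\wedge\Sigma^{-3}MSO_3 \xrightarrow{1\wedge\psi} \mathrm{MSpin}\wedge \mathrm{MSpin^h}\longrightarrow \mathrm{MSpin^h}.
\]

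To show $\phi$ is a mod 2 equivalence, I would identify $\phi$ on cohomology. The map arises from $f\colon B\Spin\times BSO(3)\to B\Spin^h$ classifying the spin$^h$ structure on $\gamma_{\Spin}+\epsilon-3$ (with auxiliary bundle $\epsilon$), and after the Thom isomorphism $\phi^*$ becomes $f^*\colon H^*B\Spin^h\to H^*B\Spin\otimes H^*BSO(3)$. Hu's Theorem~\ref{thm:Hu_HBSpinh} presents the source as $H^*BSO/(\Sq^1\nu_{2^{r+1}})_{r\ge 1}$, generated by the Stiefel-Whitney classes $w_i$ with $i\neq 2^{r+1}+1$, while the target is well understood from the Anderson-Brown-Peterson presentation of $H^*B\Spin$ together with $H^*BSO(3)=\Z_2[w_2,w_3]$. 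By tracking the images of the $w_i$ under $f^*$ (which follow the Whitney formula applied to $\gamma_{\Spin}+\epsilon$) and matching Poincar\'e series degree by degree, I would conclude that $f^*$ is an isomorphism.

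The main obstacle is this last generator and Poincar\'e-series bookkeeping, since the presentations of the two cohomology rings look superficially quite different despite being abstractly isomorphic. A potentially cleaner alternative is to invoke Stolz's structure theorem and reduce the check to the induced map $\bar\phi$ on $R$-indecomposable quotients, where both sides become $A(1)_*$-comodules; however, that approach shifts the burden to an independent determination of $\overline{H_*\mathrm{MSpin^h}}$, which is precisely what the remainder of the paper undertakes.
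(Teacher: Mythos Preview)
Your construction of the map coincides with the paper's: once you unwind the $\mathrm{MSpin}$-module extension of $\psi$, the underlying map of classifying spaces is exactly the paper's $f\colon B\Spin(n)\times BSO_3 \to B\Spin^h(n+3)$ sending $(E,F)\mapsto (E\oplus F,F)$ (your observation that $2\epsilon$ is spin is the special case $E=0$). So the two proofs agree on the map and on reducing to the claim that $f^*$ is a mod~2 cohomology isomorphism in the stable range.

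Where the paper differs is in how it verifies that claim, and this is precisely the step you flag as the main obstacle. Rather than confronting Hu's presentation of $H^*B\Spin^h$ against the Anderson--Brown--Peterson description of $H^*B\Spin$, the paper observes that $f$ sits in a pullback square
\[
\begin{tikzcd}
B\Spin(n)\times BSO_3 \arrow{r}{f}\arrow{d}{\alpha} & B\Spin^h(n+3)\arrow{d}{\beta}\\
BSO_n\times BSO_3 \arrow{r}{g} & BSO_{n+3}\times BSO_3
\end{tikzcd}
\]
in which both vertical maps are homotopy fibers of maps to $K(\Z_2,2)$ (picking out $\tilde w_2$ on the left and $w_2+w_2'$ on the right), and $g$ is induced by $(E,F)\mapsto(E\oplus F,F)$. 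The Whitney-formula computation of $g^*$ is elementary and shows $g^*$ is an isomorphism in degrees $\le n$; since the diagram is a pullback of fibrations with the same fiber, $f^*$ is then an isomorphism in the same range. This completely bypasses the generator-and-Poincar\'e-series bookkeeping you anticipated, and in particular never needs Theorem~\ref{thm:Hu_HBSpinh} or the ABP presentation of $H^*B\Spin$.
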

\begin{proof}

First, note that Definition \ref{def:spinh_vect} is equivalent to saying that a $Spin^h(n)$ vector bundle is a pair $(E_1,E_2)$ consisting of a principal $SO(n)$-bundle $E_1$ and $SO(3)$-bundle $E_2$ such that $w_2(E_1) = w_2(E_2)$. Then given a principal $Spin(n)$-bundle $E$ and $SO(3)$-bundle $F$, the pair $(E \oplus F, F)$ is a $Spin^h(n+3)$-bundle since $w_2(E \oplus F) = w_2(E) + w_1(E)w_1(F) + w_2(F) = w_2(F)$, where the first two terms vanish since $E$ is spin.

This construction gives a map $f: BSpin(n) \times BSO_3 \to BSpin^h(n+3)$. We claim that this map induces an isomorphism on mod 2 cohomology as $n \to \infty$. By the Thom isomorphism theorem, the claim implies that the corresponding map on the Thom space level $\mathrm{MSpin}(n) \wedge MSO_3 \to \mathrm{MSpin^h}(n+3)$ is an isomorphism on mod 2 cohomology as $n \to \infty$. Thus $f$ provides a map $\mathrm{MSpin} \wedge MSO_3 \to \Sigma^3 \mathrm{MSpin^h}$ that induces an $A$-module isomorphism on cohomology, hence by the Adams spectral sequence a 2-local homotopy equivalence (strictly speaking, the Adams spectral sequence gives a 2-completed homotopy equivalence, but we can conclude that $f$ gives a 2-local equivalence since we know the structure of these spectra with 2 inverted).

Now let us prove the claim. In the way of notation, denote elements of $H^*BSO_{n+3}$ by $w_i$s, $H^*BSO_n$ by $\tilde{w}_i$s, and $H^*BSO_3$ by $w_i'$s. Now note that there is a pullback diagram

\begin{center}
	\begin{tikzcd}
	BSpin(n) \times BSO_3 \arrow{r}{f} \arrow{d}{\alpha} & BSpin^h(n+3) \arrow{d}{\beta} \\
	BSO_n \times BSO_3 \arrow{r}{g} & BSO_{n+3} \times BSO_3
	\end{tikzcd}
\end{center}

\noindent where $\alpha$ is obtained by taking the fiber of $BSO_n \times BSO_3 \to K(\Z_2,2)$ corresponding to $\tilde{w}_2 \in H^2(BSO_n \times BSO_3)$ (or, put more simply, $\alpha$ is the map $Bp \times \id$, where $p$ is the double covering of $SO_n$ by $Spin(n)$) and $\beta$ is obtained by taking the fiber of $BSO_{n+3} \times BSO_3 \to K(\Z_2,2)$ corresponding to $w_2 + w_2' \in H^2(BSO_{n+3}\times BSO_3)$, and $g$ is the map induced by sending a pair of bundles $(E,F)$ to $(E \oplus F, F)$. This diagram is a pullback diagram because the corresponding map on groups is a pullback diagram, and the classifying space functor preserves small limits.

Further, a straightforward computation gives
\begin{align*}
		&g^*(w_2) = \tilde{w}_2 + w_2' & &g^*(w_2') = w_2' \\
		&g^*(w_3) = \tilde{w}_3 + w_3' & &g^*(w_3') = w_3' \\
		&g^*(w_i) = \tilde{w}_i + \tilde{w}_{i-2}w_2' + \tilde{w}_{i-3}w_3', i > 3
\end{align*}

\noindent so it is apparent that $g^*$ is surjective with kernel concentrated in degrees $\geq n+1$. Consequently, as $n \to \infty$ we obtain that $g^*$ is an isomorphism and hence the same is true for $f^*$.
\end{proof}

This gives the stronger result
\begin{corollary}\label{cor:mspinh_smashsplit}
$\mathrm{MSpin^h} \simeq \mathrm{MSpin} \wedge \Sigma^{-3}MSO_3$ as $\mathrm{MSpin}$-module spectra.
\end{corollary}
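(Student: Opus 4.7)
The plan is to upgrade the 2-local homotopy equivalence $f \colon \mathrm{MSpin} \wedge \Sigma^{-3}MSO_3 \to \mathrm{MSpin^h}$ built in Lemma \ref{lem:mspinh_smashsplit2} to an equivalence of $\mathrm{MSpin}$-module spectra. Since a homotopy equivalence of $\mathrm{MSpin}$-modules that is simultaneously $\mathrm{MSpin}$-linear is automatically an equivalence in the category of $\mathrm{MSpin}$-modules, the entire task reduces to verifying that the map $f$ already constructed respects both module structures; no new map needs to be produced.

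To verify $\mathrm{MSpin}$-linearity, I would unwind the construction of $f$ at the classifying space level: it is induced by the direct-sum operation $(E,F) \mapsto (E \oplus F, F)$, sending a pair consisting of a spin bundle and an $SO(3)$-bundle to a $\mathrm{Spin}^h$-bundle. The $\mathrm{MSpin}$-module structure on $\mathrm{MSpin} \wedge \Sigma^{-3}MSO_3$ acts on the first smash factor via the ring multiplication of $\mathrm{MSpin}$, which corresponds geometrically to direct sum of spin bundles. The $\mathrm{MSpin}$-module structure on $\mathrm{MSpin^h}$ comes from the pairing $\mathrm{MSpin} \wedge \mathrm{MSpin^h} \to \mathrm{MSpin^h}$ that exists precisely because the direct sum of a spin bundle and a $\mathrm{Spin}^h$-bundle is again $\mathrm{Spin}^h$ (\cite{albanesemilivojevic2021spinh}, 3.6). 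Given an additional spin bundle $E'$, both routes around the compatibility square send $(E', (E,F))$ to $(E' \oplus E \oplus F, F)$ by associativity of direct sum, so the required square commutes at the bundle level.

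The main step requiring care is mostly bookkeeping: confirming that the classifying space map $BSpin(n) \times BSO_3 \to BSpin^h(n+3)$ actually realizes the $\mathrm{MSpin}$-action at the Thom spectrum level after passing to the colimit in $n$, rather than only up to some degree shift or twist, and that the shift built into $\Sigma^{-3}$ matches the shift by $3$ acquired in passing from $BSO_3$ to $MSO_3$. Once this geometric identification is in place, $f$ is an $\mathrm{MSpin}$-linear map that is a 2-local equivalence, hence a 2-local equivalence of $\mathrm{MSpin}$-module spectra, which is the content of the corollary.
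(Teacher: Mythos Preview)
Your verification of $\mathrm{MSpin}$-linearity is exactly the paper's argument: in the construction $(E,F)\mapsto(E\oplus F,F)$, acting by an additional spin bundle only touches the first component, so the map $f$ commutes with the $\mathrm{MSpin}$-action. That part is fine.

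The gap is in the conclusion. You end by saying that $f$ is ``a 2-local equivalence of $\mathrm{MSpin}$-module spectra, which is the content of the corollary.'' It is not. The corollary asserts an \emph{unqualified} equivalence $\mathrm{MSpin^h}\simeq \mathrm{MSpin}\wedge\Sigma^{-3}MSO_3$ of $\mathrm{MSpin}$-module spectra, and this is the ``stronger'' result relative to Lemma~\ref{lem:mspinh_smashsplit2}, which only gave a $2$-local statement. Your argument, as written, never leaves the $2$-local world, so it does not prove what is claimed.

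The missing step, which the paper supplies, is to check that $f$ is also an equivalence with $2$ inverted: after inverting $2$ one has $\Spin^h(n)\simeq \Spin(n)\times \Sp(1)\simeq \Spin(n)\times SO(3)$, so the map is an equivalence there (and is certainly a rational isomorphism). Since all torsion in sight is $2$-torsion, combining this with the $2$-local equivalence shows $f$ induces an isomorphism on integral (co)homology, and the Whitehead theorem then gives the honest homotopy equivalence. Once you add this observation, your proof matches the paper's.
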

\begin{proof}
In the construction $(E,F) \mapsto (E \oplus F, F)$, summing a spin bundle onto either side only involves summing with the first component, hence $f$ is a map of $\mathrm{MSpin}$-module spectra. One also concludes that $f$ is an equivalence with $2$ inverted, since with $2$ inverted $Spin^h(n) \simeq Spin(n) \times Sp(1) = Spin(n) \times SO(3)$ and $f$ is certainly a rational isomorphism. Since all torsion is $2$-torsion, $f$ gives an isomorphism on integral cohomology, and by the Whitehead theorem is a homotopy equivalence.
\end{proof}

By work discussed in section \ref{sec:prelims} (explicitly, this is a consequence of Corollary 5.5 discussed in section 6 of \cite{stolz1992scpsc}), one has that $\overline{H^*\mathrm{MSpin^h}} \cong \overline{H^*\mathrm{MSpin}}\otimes_{\Z_2} H^*MSO_3$ so to compute the $E_2$-term of the Adams spectral sequence for $\mathrm{MSpin^h}$ we need only compute the $A(1)$-module structure of $\overline{H^*\mathrm{MSpin}}\otimes_{\Z_2} H^*MSO_3$, where the action of $A(1)$ on this tensor product is given by the Cartan formula.

The $A(1)$-module structure of each factor above is well-understood and is given by the following lemma. Let $I$ be the augmentation ideal of $A(1)$, $J$ the ``joker" $A(1)/(A(1)\Sq^{1,2})$, and $K = A(1)/(A(1)\Sq^1 + A(1)\Sq^{2,1,2})$ (note that $K = \overline{H^*\mathrm{ksp}} = \Sigma^{-4}\overline{H^*\mathrm{ko}\langle 4 \rangle}$). Images of these $A(1)$-modules are depicted below, where a straight line from a basis element $x$ to a basis element $y$ depicts that $\Sq^1(x) = y$, and respectively for curved lines and $\Sq^2$. We remark that our conventions have the lowest nonzero element in $I$, $J$, or $K$ in degree $1, 0$ or $0$ (resp.).

\begin{center}
    \includegraphics[scale=0.75]{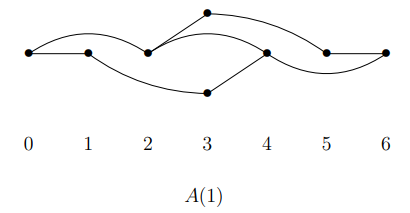}\includegraphics[scale=0.75]{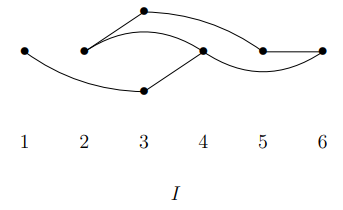}

    \includegraphics[scale=0.85]{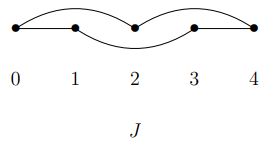}\includegraphics[scale=0.85]{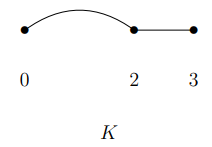}
\end{center}

\begin{proposition} \label{prop:mspin_mso}
$\overline{H^*\mathrm{MSpin}}$ is isomorphic to a direct sum of suspensions of $A(1)$, $A(1)/I \cong \Z_2$, and $J$. $H^*MSO_3$ is isomorphic to a direct sum of suspensions of $A(1)$, $I$, and $K$.
\end{proposition}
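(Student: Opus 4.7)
The plan is to attack the two halves of the proposition independently. The first assertion—that $N=\overline{H^*\mathrm{MSpin}}$ splits as a direct sum of suspensions of $A(1)$, $A(1)/I\cong\Z_2$, and $J$—is the principal $A(1)$-module structure theorem of Anderson--Brown--Peterson, Theorem~2.2 of \cite{abp1967spin}, so for this half I would simply cite and summarize their result.

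For $H^*MSO_3$ I would proceed computationally. The Thom isomorphism gives $H^*MSO_3\cong\Z_2[w_2,w_3]\cdot U$ with $|U|=3$, and the $A(1)$-action on any monomial $w_2^a w_3^b U$ is fixed by the Wu formulas $\Sq^1 w_2=w_3$, $\Sq^2 w_2=w_2^2$, $\Sq^1 w_3=0$, $\Sq^2 w_3=w_2 w_3$ (the higher Wu terms vanish since $w_4=w_5=0$ in $H^*BSO_3$), the Thom-class relations $\Sq^i U=w_i U$ for $i\le 3$, and the Cartan formula. Working upward degree by degree I would exhibit explicit generators and identify the $A(1)$-submodules they produce as being of one of the allowed types: $U$ generates a $\Sigma^3 K$ on $\{U,w_2 U,w_3 U\}$; the pair $w_2^2 U$ and $w_2 w_3 U$ together generates a $\Sigma^6 I$ on seven classes (the identity $\Sq^1 \Sq^2(w_2^2 U)=w_2^2 w_3 U=\Sq^2(w_2 w_3 U)$ is what glues the two generators into an $I$ rather than two disjoint pieces); $w_2^4 U$ generates $\Sigma^{11} K$; and so on.

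To certify that these local identifications assemble into a global direct-sum decomposition and to rule out unwanted summand types, I would combine Margolis cohomology with Poincar\'e-series bookkeeping. Using that $Q_0=\Sq^1$ and $Q_1=\Sq^3+\Sq^{2,1}$ act as derivations on $H^*MSO_3$ with $Q_0(w_2^a w_3^b U)=a\,w_2^{a-1} w_3^{b+1} U$ and $Q_1(w_2^a w_3^b U)=(a+b+1)\,w_2^a w_3^{b+1} U$, one computes that $H(H^*MSO_3;Q_0)$ is one-dimensional in each degree of $\{3,7,11,\ldots\}$ (generators the even powers $w_2^{2k}U$) and $H(H^*MSO_3;Q_1)$ is one-dimensional in each of $\{5,9,13,\ldots\}$ (generators the odd powers $w_2^{2k+1}U$). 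Since $A(1)$ has vanishing Margolis cohomology, while $\Sigma^b I$ contributes a paired class in degrees $(b+1,b+3)$, $\Sigma^c K$ contributes in degrees $(c,c+2)$, and $\Sigma^a J$ would contribute a pair in the \emph{same} degree $a+2$, the observed Margolis pattern forbids any $J$ summand and matches exactly the allowed $I,K$ placements; subtracting these contributions from the Poincar\'e series $t^3/((1-t^2)(1-t^3))$ then pins down the $A(1)$-free summands.

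The main obstacle is the ``apparent overlap'' phenomenon for naively chosen generators: $w_2 w_3 U$ taken on its own generates a five-element submodule that is abstractly isomorphic to $\Sigma^8 J$, but inside $H^*MSO_3$ this submodule shares its image $w_2^2 w_3 U$ with the submodule of $w_2^2 U$, and the combined module is actually a $\Sigma^6 I$. Systematically choosing generators degree by degree so that the resulting submodules are genuinely complementary—rather than merely each abstractly of one of the permitted isomorphism types—is the combinatorial crux of the argument.
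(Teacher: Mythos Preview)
Your proposal is correct and matches the paper's approach exactly: the paper does not give an independent argument but simply cites \cite{abp1967spin} for the $\overline{H^*\mathrm{MSpin}}$ half and defers the $H^*MSO_3$ half to the explicit hand computation in Appendix~D of \cite{freedhopkins2021refl}, which is precisely the Wu-formula/Margolis-homology/Poincar\'e-series argument you outline. Your explicit checks (the $Q_0$- and $Q_1$-actions, the gluing identity $\Sq^{1}\Sq^{2}(w_2^2U)=w_2^2w_3U=\Sq^2(w_2w_3U)$, and the resulting Margolis pattern forbidding $J$ or $\Z_2$ summands) are all accurate.
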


The first statement is the content of \cite{abp1967spin}. The second is easily computed by hand, and its computation is carried out in detail by Freed and Hopkins in Appendix D of \cite{freedhopkins2021refl} (see D.4, D.5).

Proposition \ref{prop:mspin_mso} implies that as an $A(1)$-module, $\overline{H^*\mathrm{MSpin^h}}$ is a direct sum of suspensions of each of the pairwise tensor products above. These tensor products (over $\Z_2$) are easily computable as follows.
\begin{lemma} \label{lem:piecestensor}
\begin{enumerate}
	\item For any connected $A(1)$-module $M$, $A(1) \otimes M$ is a free $A(1)$-module of rank $\rk_{\Z_2}M$.
	\item $J \otimes K \cong \Sigma I \oplus A(1)$.
	\item $J \otimes \Sigma^{-1} I \cong \Sigma^2 K \oplus A(1) \oplus \Sigma A(1) \oplus \Sigma^2 A(1) \oplus \Sigma^3 A(1)$.
\end{enumerate}
\end{lemma}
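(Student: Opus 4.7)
For part (1), the plan is to invoke the standard Hopf algebra identity: since $A(1)$ is a cocommutative sub-Hopf algebra of $A$ with antipode $\chi$, the $\Z_2$-linear map $\Phi : A(1) \otimes M \to A(1) \otimes M$ defined by $\Phi(a \otimes m) = \sum a_{(1)} \otimes \chi(a_{(2)})\, m$ (where $\Delta a = \sum a_{(1)} \otimes a_{(2)}$ is the coproduct) is an $A(1)$-module isomorphism from the diagonal Cartan action on the source to the action living only on the first tensor factor on the target. The target is manifestly free of rank $\dim_{\Z_2} M$ on any $\Z_2$-basis $\{1 \otimes m_i\}$, which proves the claim; I would either sketch this verification in a sentence or cite it as a standard Hopf-algebra fact.

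For parts (2) and (3), the plan is a direct bookkeeping computation using the Cartan formula, which over $\Z_2$ reduces to $\Sq^1(x \otimes y) = \Sq^1 x \otimes y + x \otimes \Sq^1 y$ and $\Sq^2(x \otimes y) = \Sq^2 x \otimes y + \Sq^1 x \otimes \Sq^1 y + x \otimes \Sq^2 y$. I would fix explicit $\Z_2$-bases for $J$, $K$, and $I$ straight off the pictures in the text (using labels like $j_0, j_1, j_2, j_3, j_4$ for the five basis elements of $J$ in degrees $0$ through $4$), then compute the action of $\Sq^1$ and $\Sq^2$ on each basis element of the tensor product and identify generators whose $A(1)$-orbits realize the claimed summands.

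As a first consistency check, the Poincar\'e series must match: for (2) we get $\dim_{\Z_2}(\Sigma I) + \dim_{\Z_2} A(1) = 7 + 8 = 15 = 5 \cdot 3 = (\dim_{\Z_2} J)(\dim_{\Z_2} K)$, and for (3) we get $3 + 4 \cdot 8 = 35 = 5 \cdot 7$, with graded dimensions also matching degree by degree. The actual decomposition is then produced by working bottom-up: locate the lowest-degree element in the tensor product not hit by $\Sq^1$ or $\Sq^2$, take it as the generator of a summand, peel off the corresponding cyclic $A(1)$-submodule (isomorphic to $\Sigma^m A(1)$, $\Sigma^m I$, or $\Sigma^m K$ as appropriate), and iterate on a complementary graded subspace. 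For (2) one expects the single $A(1)$-summand to be generated by the bottom element $j_0 \otimes k_0$, with the $\Sigma I$ summand generated by an element in degree $1$; for (3) one expects the four free summands to account for the bulk of the $35$ dimensions and a residual $\Sigma^2 K$ to pick up the remaining three.

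The main obstacle is verifying that the submodules one peels off really are direct summands (and not merely submodules whose dimensions happen to sum correctly), together with correctly matching the $\Sq^1$- and $\Sq^2$-action on the chosen generator to one of the three model pictures. Each such verification is a finite and visual case analysis given the small sizes involved, but for part (3) in particular the sheer number of basis elements and the asymmetric shape of $I$ mean the degree-by-degree accounting has to be carried out with care; I would organize it in a table of $\Sq^1$- and $\Sq^2$-values on the $35$ basis elements and then exhibit the change of basis realizing the claimed splitting.
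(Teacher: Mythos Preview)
Your plan is correct overall, and for parts (2) and (3) it is essentially the paper's approach: a direct Cartan-formula computation exhibiting explicit generators, with the paper additionally citing the Adams--Priddy stable classification as a conceptual backstop. One small slip: you expect the $\Sigma I$ summand in (2) to begin in degree $1$, but with the paper's convention $I$ starts in degree $1$, so $\Sigma I$ starts in degree $2$; indeed in degree $1$ the only class $j_1k_0=\Sq^1(j_0k_0)$ already lies in the free summand. The paper names $j_0k_0$ as the free generator and $j_2k_0$ as the bottom class of the $\Sigma I$ summand, and for (3) gives the free generators $j_0i_1,\,j_0i_2,\,j_1i_2,\,j_2i_2$ together with $j_0i_3'+j_1i_2+j_2i_1$ for the $\Sigma^2 K$; your bottom-up peeling procedure would locate these.

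For part (1) you take a genuinely different route. The paper uses Margolis homology: the K\"unneth formula gives $H^*(A(1)\otimes M;Q_i)\cong H^*(A(1);Q_i)\otimes H^*(M;Q_i)=0$, and then invokes the Adams--Margolis criterion that a connected $A(1)$-module is free iff both Margolis homologies vanish. Your Hopf-algebra untwisting map $\Phi(a\otimes m)=\sum a_{(1)}\otimes \chi(a_{(2)})m$ is more elementary and yields the rank directly, and in fact does not require $M$ to be connected at all; the paper's argument, on the other hand, stays within the $A(1)$-module toolkit (Margolis homology, freeness criteria) used throughout, and makes transparent why the hypothesis ``connected'' appears in the statement.
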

\begin{proof}
For (1), recall that the K\"unneth formula for Margolis homology gives $H^*(L \otimes M; z) \cong H^*(L;z) \otimes H^*(M;z)$ where $z$ is $Q_0 = \Sq^1$ or $Q_1 = \Sq^{1,2} + \Sq^{2,1}$. Together with the fact that a connected $A(1)$-module is free if and only if its Margolis homologies vanish (\cite{adamsmargolis1971modules}), setting $L = A(1)$ we see that $H^*(A(1) \otimes M; z) = 0 \otimes H^*(M;z) = 0$, from which the result follows.

As noted in \cite{stolz1992scpsc}, (2) and (3) follow (stably) from a classification result of Adams and Priddy in \cite{adamspriddy1976uniqueness}, and our isomorphism is the result of a direct computation. We will describe bases for the terms on both sides of (2) and (3). Also for these calculations, note that we have adopted the convention that $J$ and $K$ start in degree zero and $I$ starts in degree $1$.

Denote a ($\Z_2$) basis for $K$ by elements $k_l$ in degree $l$, so $K$ has basis $\{k_0, k_2, k_3\}$, and similarly denote the basis of $J$ by $\{j_0, j_1, j_2, j_3, j_4\}$. For $I$ we adopt the same convention, labeling elements by $i_l$s, but there are two basis elements in degree 3; let $i_3$ be the element such that $i_3 = \Sq^2 i_1$ and $i_3'$ the element with $i_3' = \Sq^1 i_2$. For tensors omit the tensor product symbol, so $j_l \otimes k_m \in J \otimes K$ will be written $j_l k_m$.

For the isomorphism (2), $j_2k_0$ generates a copy of $I$ under the action by $A(1)$ and $j_0k_0$ generates the $A(1)$. For (3), the copies of $A(1)$ are generated by $j_0i_1, j_0i_2, j_1i_2,$ and $j_2i_2,$ and the copy of $K$ is generated by $j_0i_3' + j_1i_2 + j_2i_1$. The full computation of this fact is given in Figure \ref{fig:piecestensor}.
\end{proof}

\begin{center}
\begin{figure}
    \includegraphics[scale=0.7]{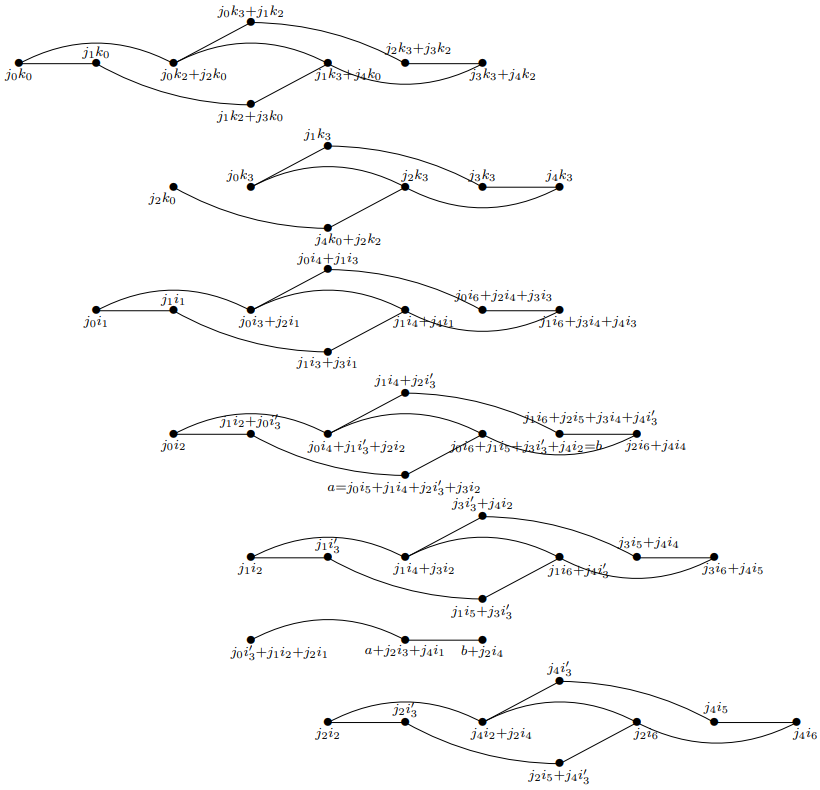}
    \caption{Explicit calculation for the proof of Lemma \ref{lem:piecestensor}, parts (2) and (3). Modules are aligned according to degree.}
    \label{fig:piecestensor}
\end{figure}
\end{center}

We have shown

\begin{theorem} \label{thm:mspinhstructure}
As an $A(1)$-module, $\overline{H^*\mathrm{MSpin^h}}$ is a direct sum of suspensions of $A(1), I,$ and $K$. Each of these summands can be located by locating the appropriate summands of $A(1), \Z_2 = A(1)/I$, and $J$ in $\overline{H^*\mathrm{MSpin}}$ and $A(1), I,$ and $K$ in $H^*MSO_3$ and then applying \ref{lem:piecestensor}.
\end{theorem}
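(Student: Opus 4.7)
My plan is to assemble this result from the three inputs established earlier in the section. First I would invoke Corollary \ref{cor:mspinh_smashsplit} together with the Stolz formalism from section \ref{sec:prelims} to reduce the problem to the $A(1)$-module isomorphism
\[
\overline{H^*\mathrm{MSpin^h}} \;\cong\; \overline{H^*\mathrm{MSpin}} \,\otimes_{\Z_2}\, \Sigma^{-3} H^*MSO_3,
\]
where $A(1)$ acts on the right via the Cartan formula. This reduction is already recorded at the top of this section, so the remaining task is to analyze the right-hand side as an $A(1)$-module.

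Next I would apply Proposition \ref{prop:mspin_mso} to break each tensor factor into a direct sum of suspensions of known $A(1)$-modules: $A(1)$, $\Z_2 = A(1)/I$, and $J$ on the left, and $A(1)$, $I$, and $K$ on the right. Distributing the tensor product over these direct sums produces nine types of pairwise pieces (up to suspension) to identify. I would handle the cases as follows. Any tensor product in which one factor is a (suspension of a) free $A(1)$-module is itself a free $A(1)$-module by Lemma \ref{lem:piecestensor}(1), contributing only suspensions of $A(1)$. Any tensor product with $\Z_2 = A(1)/I$ is the identity on the other factor, contributing a suspension of $A(1)$, $I$, or $K$. The only nontrivial cases are $J \otimes K$ and $J \otimes I$ (the latter up to an appropriate suspension shift to match the conventions of Lemma \ref{lem:piecestensor}(3)), and these are precisely the contents of parts (2) and (3) of Lemma \ref{lem:piecestensor}; both yield only suspensions of $A(1)$, $I$, and $K$.

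For the location statement, I would simply track the bookkeeping: for each pair of summands $M \subseteq \overline{H^*\mathrm{MSpin}}$ and $N \subseteq H^*MSO_3$, the piece $M \otimes N$ is a direct summand of $\overline{H^*\mathrm{MSpin^h}}$ whose decomposition into suspensions of $A(1)$, $I$, and $K$ is read off directly from the appropriate part of Lemma \ref{lem:piecestensor}, with an overall $\Sigma^{-3}$ shift coming from the smash decomposition in Corollary \ref{cor:mspinh_smashsplit}. Summing over all such pairs gives both statements of the theorem simultaneously.

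I do not expect a genuine obstacle here: all substantive work has already been done in Corollary \ref{cor:mspinh_smashsplit} (which identifies $\mathrm{MSpin^h}$ as a smash product), in Proposition \ref{prop:mspin_mso} (which decomposes the two tensor factors into known pieces), and in Lemma \ref{lem:piecestensor} (which evaluates the pairwise tensor products). The theorem is the assembly of these three inputs, and the only care required is to keep the suspensions straight. The interesting computational content does not enter until the next section, where one must actually enumerate the summands to produce an explicit wedge decomposition in each degree.
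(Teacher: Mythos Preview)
Your proposal is correct and matches the paper's approach exactly: the paper does not even give a separate proof, writing only ``We have shown'' before the theorem, since the result is precisely the assembly of Corollary~\ref{cor:mspinh_smashsplit} (and the Stolz reduction following it), Proposition~\ref{prop:mspin_mso}, and Lemma~\ref{lem:piecestensor} that you describe. Your case analysis of the nine pairwise tensor products is exactly what is implicit in the paper's presentation.
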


To locate the summands for $H^*\mathrm{MSpin}$ one uses the Poincar\'e series found in \cite{abp1966spinshort}. For $H^*MSO_3$ the summands can also be located via Poincar\'e series, given in Appendix D of \cite{freedhopkins2021refl}. Hence the homotopy groups of $\mathrm{MSpin^h}$ are easily determined from this information; via \cite{stolz1994splitting}, for an $\mathrm{MSpin}$-module spectrum $Y$ summands of $\overline{H^*Y}$ correspond to wedge summands of $Y$ by the proof of Corollary \ref{cor:mspinhhtpy} below: $K$ corresponds to the spectrum $\mathrm{ksp} = \Sigma^{-4} \mathrm{ko} \langle 4 \rangle$, $A(1)$ the Eilenberg-MacLane spectrum $H\Z_2$, and $I$ the cofiber $C$ of the map $\mathrm{ko} \to H\Z_2$ representing the nontrivial class in $H^0\mathrm{ko}$. Let $\widetilde{\mathrm{ko}} = \Sigma^{-1}C$; it is easy to see that $\widetilde{\mathrm{ko}}$ has homotopy groups identical to those of $\mathrm{ko}$.

\begin{corollary} \label{cor:mspinhhtpy}
$\mathrm{MSpin^h}$ is 2-locally a generalized wedge sum of copies of $H\Z_2$, $\mathrm{ksp}$, and $\widetilde{\mathrm{ko}}$.
\end{corollary}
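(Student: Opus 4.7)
The plan is to lift the algebraic splitting of $\overline{H^*\mathrm{MSpin^h}}$ given in Theorem \ref{thm:mspinhstructure} to an honest 2-local splitting of $\mathrm{MSpin^h}$, via the general splitting machinery for $\mathrm{MSpin}$-module spectra developed by Stolz in \cite{stolz1994splitting}. The whole argument sits inside the framework of section \ref{sec:prelims}: the extended $A$-module structure $H^*\mathrm{MSpin^h}\cong A\otimes_{A(1)}\overline{H^*\mathrm{MSpin^h}}$ converts the algebraic direct-sum decomposition $\overline{H^*\mathrm{MSpin^h}}\cong\bigoplus_\alpha \Sigma^{n_\alpha}M_\alpha$, with each $M_\alpha\in\{A(1),I,K\}$, into a decomposition of $H^*\mathrm{MSpin^h}$ as an $A$-module.

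First I would pin down the three spectra realizing the basic $A(1)$-modules: $H\Z_2$ with $\overline{H^*H\Z_2}=A(1)$, the spectrum $\mathrm{ksp}=\Sigma^{-4}\mathrm{ko}\langle 4\rangle$ with $\overline{H^*\mathrm{ksp}}=K$, and $\widetilde{\mathrm{ko}}=\Sigma^{-1}C$, where $C$ is the cofiber of the map $\mathrm{ko}\to H\Z_2$ representing the nonzero class in $H^0\mathrm{ko}$, so that $\overline{H^*\widetilde{\mathrm{ko}}}=\Sigma^{-1}I$ and $\pi_*\widetilde{\mathrm{ko}}=\pi_*\mathrm{ko}$. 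The first two are immediate; the third follows from the cofiber sequence and the known structure of $H^*\mathrm{ko}$.

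Next I would invoke Stolz's splitting theorem for $\mathrm{MSpin}$-module spectra (\cite{stolz1994splitting}): for each wedge-summand-type $A(1)$-module $M_\alpha$ appearing in $\overline{H^*\mathrm{MSpin^h}}$, there is a map from the corresponding spectrum $X_\alpha\in\{H\Z_2,\widetilde{\mathrm{ko}},\mathrm{ksp}\}$ (suitably suspended) into $\mathrm{MSpin^h}$ whose effect on $\overline{H^*{-}}$ realizes the inclusion of the summand $\Sigma^{n_\alpha}M_\alpha$. Assembling these maps yields
\[
f:\bigvee_\alpha \Sigma^{n_\alpha}X_\alpha\longrightarrow\mathrm{MSpin^h}
\]
which, by construction, induces an isomorphism on $\overline{H^*{-}}$ and hence, by the extended-module isomorphism from section \ref{sec:prelims}, an isomorphism on mod $2$ cohomology.

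Finally I would promote this cohomology equivalence to a 2-local homotopy equivalence. Both sides have homology that is bounded below and locally finite, so the mod $2$ Adams spectral sequence converges, and an $A$-module isomorphism on $H^*(-;\Z_2)$ yields a $2$-completed equivalence; combined with the description of $\mathrm{MSpin^h}$ away from $2$ from \cite{Hu}, A.0.2, this upgrades to a $2$-local equivalence. The main obstacle is the middle step: what makes this go through is precisely Stolz's theorem, which requires verifying that $A(1)$, $I$, and $K$ are the correct ``building block'' $A(1)$-modules for the splitting statement to apply to an $\mathrm{MSpin}$-module spectrum of this form. Once that hypothesis is checked, the explicit realization of each summand by a map out of $X_\alpha$ is automatic and the remainder of the argument is formal.
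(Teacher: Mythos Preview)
Your proposal is correct and follows the same route as the paper: both deduce the 2-local splitting from Theorem \ref{thm:mspinhstructure} by feeding the $A(1)$-module decomposition of $\overline{H^*\mathrm{MSpin^h}}$ into Stolz's splitting machinery from \cite{stolz1994splitting}. The one substantive difference is in how the building-block spectra enter. You invoke Stolz's theorem as if it directly produces maps from (suspensions of) $H\Z_2$, $\widetilde{\mathrm{ko}}$, $\mathrm{ksp}$ into $\mathrm{MSpin^h}$. The paper is more faithful to what Stolz's Corollary 4.2 actually provides: one first chooses \emph{finite} spectra $X$, $X_I$, $X_K$ with $H^*X\cong A(1)$, $H^*X_I\cong \Sigma^{-1}I$, $H^*X_K\cong K$ as $A(1)$-modules (existence via \cite{stolz1994splitting}, 8.5 and \cite{davismahowald1981periodicity}, 2.1), applies Stolz's result to split $\mathrm{MSpin^h}$ as a wedge of suspensions of $\mathrm{ko}\wedge X$, $\mathrm{ko}\wedge X_I$, $\mathrm{ko}\wedge X_K$, and then applies the same result a second time to each of $H\Z_2$, $\widetilde{\mathrm{ko}}$, $\mathrm{ksp}$ to identify these with $\mathrm{ko}\wedge X$, $\mathrm{ko}\wedge X_I$, $\mathrm{ko}\wedge X_K$ respectively. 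The paper also records the hypothesis check you allude to: the copies of $\Sigma^{-1}I$ and $K$ sit in degrees $\equiv 0\pmod 4$, and $A(1)$ is free, so Stolz's assumptions hold. Your sketch collapses these two applications into one, which is harmless once the finite-spectrum step is understood but slightly obscures the actual mechanism of Stolz's theorem.
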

\begin{proof}
    Apply Corollary 4.2 of \cite{stolz1994splitting} several times to obtain this assertion as follows. First, there are finite spectra $X, X_I,$ and $X_K$ whose cohomologies are isomorphic as $A(1)$-modules to $A(1), \Sigma^{-1}I, $ and $K$ (resp.) (cf. Propositions 8.5 of \cite{stolz1994splitting} and 2.1 of \cite{davismahowald1981periodicity}). Then apply Corollary 4.2 of \cite{stolz1994splitting} first to $\mathrm{MSpin^h}$ to obtain a generalized wedge sum of $\mathrm{ko} \wedge (X, X_{I}, \text{or} X_{K})$, and then once each to $H\Z_2 \simeq_{2} \mathrm{ko} \wedge X$, $\widetilde{\mathrm{ko}} \simeq_{2} \mathrm{ko} \wedge X_{I}$, and $\mathrm{ksp} \simeq_{2} \mathrm{ko} \wedge K$; it is easy to see that the $A(1)$-modules present satisfy the assumptions for Stolz's splitting results since copies of $\Sigma^{-1}I$ and $K$ appear in degrees $0$ mod $4$, and $A(1)$ is free.
\end{proof}

\section{A Counting Procedure}\label{sec:counting}

In this section we carry out the ``instructions" in Theorem \ref{thm:mspinhstructure} for locating summands to describe a precise decomposition of $\mathrm{MSpin^h}$ in degrees $\leq 30$.

To begin let us start with the easier component $E = \Sigma^{-3}MSO_3$. Additively we have $H^*E \cong \Z_2[w_2U, w_3U]$ for the Thom class $U$. In \cite{freedhopkins2021refl}, Freed and Hopkins show that the $A(1)$-module structure of $H^*E$ is $H^*E \cong (K \oplus \Sigma^3 I)\otimes \Z_2[w_4^2] \oplus \text{free}$. Denote the Poincar\'e series for a graded $\Z_2$-vector space by $p_M(t)$, so that we have
\begin{align*}
    p_K(t) &= 1 + t^2 + t^3, \\
    p_{\Sigma^{-1}I}(t) &= 1 + t + 2t^2 + t^3 + t^4 + t^5, \\
    p_{ \Z_2[w_4^2]}(t) &= \frac{1}{1-t^8}, \\
    p_E(t) &= \frac{1}{(1-t^2)(1-t^3)}, \\
    p_{A(1)}(t) &= (1+t)(1+t^2)(1+t^3).
\end{align*}
Then one can locate the free modules by computing
\[
\frac{1}{p_{A(1)}(t)}\left(p_E(t) - p_{ \Z_2[w_4^2]}(t)(p_K(t) + p_{\Sigma^3 I}(t))\right),
\]
which gives $t^9 + t^{15} + t^{17} + t^{21} + t^{23} + t^{25} + t^{27} + t^{29} + O(t^{31})$, telling us that $H^*E$ has a copy of $A(1)$ in degrees 9, 15, 17, etc. We also know that $H^*E$ has a copy of $K$ in all degrees 0 mod 8 and $I' = \Sigma^{-1}I$ in degrees 4 mod 8. Thus in degrees $\leq 30$, we have the following decomposition of $H^*E$:
\begin{center}
    \begin{tabular}{|c||c|c|c|c|c|c|c|c|}
    \hline
    degree & 0 & 4 & 8 & 9 & 12 & 15 & 16 & 17 \\
    summand & $K$ & $I'$ & $K$ & $A(1)$ & $I'$ & $A(1)$ & $K$ & $A(1)$\\
    \hline
    degree & 20 & 21 & 23 & 24 & 25 & 27 & 28 & 29 \\
    summand & $I'$ & $A(1)$ & $A(1)$ & $K$ & $A(1)$ & $A(1)$ & $I'$ & $A(1)$ \\
    \hline
\end{tabular}
\end{center}
In the above table, a summand $M$ in a particular degree denotes that the lowest-degree element of $M$ lives in the designated degree (so we have omitted suspensions), so explicitly our table means $H^*E = K \oplus \Sigma^{4}I' \oplus \Sigma^{8} K \oplus \Sigma^9 A(1) \oplus \cdots$.
\begin{remark}\label{rmk:Ishifts}
    The use of $I'$ is justified here since while $I$ corresponds to $\Sigma \widetilde{\mathrm{ko}}$, the homotopy groups of $\Sigma \widetilde{\mathrm{ko}}$ begin in degree 1, so using $I'$ we can more easily read off homotopy groups without exposing ourselves to off-by-one errors.
\end{remark}

For the $\mathrm{MSpin}$ component, a counting procedure outlined in \cite{abp1966spinshort} tells us the summands in $\overline{H^*\mathrm{MSpin}}$. We record the computation for degrees $\leq 30$ here.

Let $X$ be the graded vector space with generators $x_J$ in dimension $4n$ for each partition\footnote{These partitions cannot include 1, and we also allow $n=0$.} $J$ of $n$ with $n$ even. Similarly, let $Y$ be the graded vector space with generators $y_J$ in dimension $4n-2$ for each partition $J$ of $n$ with $n$ odd. Let $L_1 = A/A(\Sq^1, \Sq^2)$ and $L_2 = A/A(\Sq^3).$ Then Theorem 1.3 of \cite{abp1966spinshort} says that there is a graded vector space $Z$ such that
\[
H^*\mathrm{MSpin} \cong (L_1 \otimes X) \oplus (L_2 \otimes Y) \oplus (A \otimes Z).
\]
We can locate the free $A$-summands of $H^*\mathrm{MSpin}$, and hence the free $A(1)$-summands of $\overline{H^*\mathrm{MSpin}}$, by computing the Poincar\'e polynomial
\[
p_Z(t) = \frac{1}{p_A(t)}\left(p_{H^*\mathrm{MSpin}}(t) - p_{L_1}(t)p_X(t) - p_{L_2}(t)p_Y(t)\right).
\]
These Poincar\'e polynomials, truncated sufficiently for our purposes of computing $p_Z(t)$ up through degree $30$, are
\begin{align*}
    p_{H^*\mathrm{MSpin}}(t) &= \prod_{\substack{n > 3 \\ n \neq 2^r + 1}}(1-t^n)^{-1} = \prod_{\substack{4 \leq n \leq 40 \\ n \neq 5, 9, 17, 33}}(1-t^n)^{-1}, \\
    p_{L_1}(t) &= \prod_{\substack{n = 2^r - 1 \\ r \geq 3}}(1-t^n)^{-1}(1-t^4)^{-1}(1-t^6)^{-1} = \prod_{n=4,6,7,15,31}(1-t^n)^{-1}, \\
    p_{L_2}(t) &= p_{L_1}(t)(1 + t + t^2 + t^3 + t^4), \\
    p_A(t) &= \prod_{\substack{n = 2^r - 1 \\ r \geq 1}}(1-t^n)^{-1} = \prod_{n=1,3,7,15,31}(1-t^n)^{-1}, \\
    p_X(t) &= 1 + t^8 + 2t^{16} + 4t^{24} + 7t^{32}, \\
    p_Y(t) &= t^{10} + 2t^{18} + 4t^{26} + 8t^{34}.
\end{align*}
Hence $p_Z(t) = t^{20} + t^{22} + 2t^{28} + t^{29} + 3t^{30} + O(t^{32}).$ From this, we can create a similar table to the above for $\overline{H^*\mathrm{MSpin}}$, which is
\begin{center}
    \begin{tabular}{|c||c|c|c|c|c|c|}
    \hline
        degree & 0 & 8 & 10 & 16 & 18 & 20\\
        summand & $\Z_2$ & $\Z_2$ & $J$ & $\Z_2^2$ & $J^2$ & $A(1)$\\
        \hline
        degree & 22 & 24 & 26 & 28 & 29 & 30\\
        summand & $A(1)$ & $\Z_2^4$ & $J^4$ & $A(1)^2$ & $A(1)$ & $A(1)^3$\\
    \hline
    \end{tabular}
\end{center}

Given this information we can compute tensor products and use Lemma \ref{lem:piecestensor} to decompose $H^*\mathrm{MSpin^h}$. Being careful to keep track of suspensions we obtain

\begin{center}
    \begin{tabular}{|c|c|c|c|c|c|}
        \hline
        0 & 4 & 8 & 9 & 10 & 12 \\
        $K$ & $I'$ & $K^2$ & $A(1)$ & $A(1)$ & $(I')^3$\\
        \hline
        14 & 15 & 16 & 17 & 18 & 19 \\
        $A(1)$ & $A(1)^2$ & $K^5 \oplus A(1)$ & $A(1)^3$ & $A(1)^3$ & $A(1)$ \\
        \hline
        20 & 21 & 22 & 23 & 24 & 25\\
        $(I')^7 \oplus A(1)^2$ & $A(1)^2$ & $A(1)^6$ & $A(1)^7$ & $K^{11} \oplus A(1)^5$ & $A(1)^{10}$ \\
        \hline
        26 & 27 & 28 & 29 & 30 & $\geq$31\\
        $A(1)^{11}$ & $A(1)^7$ & $(I')^{15} \oplus A(1)^{10}$ & $A(1)^{10}$ & $A(1)^{19}$ & $\cdots$ \\
        \hline
    \end{tabular}
\end{center}

\noindent Hence we have the 2-local splitting
\[
\mathrm{MSpin^h} \simeq \mathrm{ksp} \vee \Sigma^4\widetilde{\mathrm{ko}} \vee \Sigma^8 \mathrm{ksp} \vee \Sigma^8 \mathrm{ksp} \vee \Sigma^9 H\Z_2 \vee \cdots
\]

\section{Further Results} \label{sec:more}
Here we discuss the map $\Ahath: \mathrm{MSpin^h} \to \mathrm{ksp}$. A more complete exposition can be found in \cite{Hu}, but for our purposes it suffices to note that $\Ahath$ is a $\mathrm{MSpin}$-module spectrum map over the spin orientation $\hat{\mathcal{A}}: \mathrm{MSpin} \to \mathrm{ko}$. Hu proves that on homotopy, the induced map (which we will also call $\Ahath$) is epic. We can strengthen this result:

\begin{lemma} \label{lem:Ahath_splitepi}
$\Ahath: \mathrm{MSpin^h} \to \mathrm{ksp}$ induces a split surjection on homotopy groups.
\end{lemma}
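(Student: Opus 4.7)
The plan is to exhibit a section by realizing $\Ahath$ as the projection onto the bottom wedge summand of the 2-local decomposition of $\mathrm{MSpin^h}$ from Corollary \ref{cor:mspinhhtpy}. From the table in Section \ref{sec:counting}, this decomposition begins with a $\mathrm{ksp}$ summand in degree 0, giving a 2-local inclusion $i: \mathrm{ksp} \to \mathrm{MSpin^h}$. The main claim is that $\Ahath \circ i: \mathrm{ksp} \to \mathrm{ksp}$ is a 2-local self-equivalence; granted this, $i_*$ provides a 2-local section of $\Ahath_*$ on homotopy.

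To prove the claim, I would work on mod 2 cohomology. Since $\Ahath$ is an $\mathrm{MSpin}$-module map lying over the spin orientation $\hat{\mathcal{A}}$, it sends the generator of $H^0\mathrm{ksp}$ (the Thom class) to the bottom class of $H^0\mathrm{MSpin^h}$, which under Stolz's decomposition $\overline{H^*\mathrm{MSpin^h}} \cong K \oplus \cdots$ generates the bottom $K$-summand. By construction, $i^*$ projects onto this same $K$-summand, so $i^* \circ \Ahath^*$ is the identity on $\overline{H^*\mathrm{ksp}} = K$ in degree $0$. The isomorphism $H^*\mathrm{ksp} \cong A \otimes_{A(1)} K$ then forces $(\Ahath \circ i)^*$ to be an isomorphism of $A$-modules, and the Adams spectral sequence for self-maps of $\mathrm{ksp}$ upgrades this to a 2-local homotopy equivalence.

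For odd primes we use the isomorphism $\Omega_*^{spin^h}[\frac{1}{2}] \cong \Omega_*^{spin} \otimes_{\Z} H_*(\HP^\infty; \Z[\frac{1}{2}])$ recalled in Section \ref{sec:intro}, under which $\Ahath$ corresponds on the $H_0(\HP^\infty)$-summand to $\hat{\mathcal{A}} \otimes \id$; since the classical spin orientation is split epic after inverting 2 (for instance via $\mathrm{MSpin}[\frac{1}{2}] \simeq \mathrm{MSO}[\frac{1}{2}]$), this produces a section on homotopy at odd primes. Combining with the 2-local section via the arithmetic fracture square yields the desired integral section of $\Ahath_*$.

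The main obstacle I anticipate is pinning down the cohomological identification in the second paragraph --- specifically, ensuring that the $\mathrm{ksp}$ summand whose inclusion we label $i$ is the one actually hit by $\Ahath^*$ on the Thom class. This reduces to the observation that $\Ahath$ preserves Thom classes by construction, fixing the relevant summand unambiguously; once this is checked, the rest of the argument is formal.
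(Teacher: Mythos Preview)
Your approach is correct but takes a genuinely different route from the paper. The paper argues degree by degree: for $k \not\equiv 5,6 \pmod 8$ the group $\pi_k\mathrm{ksp}$ is $0$ or $\Z$ and hence projective, while for $k \equiv 5,6 \pmod 8$ it explicitly constructs \spinh manifolds $M_k = \HP^1_+ \times (S^1_\eta)^{\times j} \times B^{\times m}$ (with $B$ a Bott manifold) whose bordism classes map to the generator of $\pi_k\mathrm{ksp} = \Z_2$, producing a right inverse by hand. You instead leverage the 2-local splitting of Corollary~\ref{cor:mspinhhtpy} directly, showing that the bottom $\mathrm{ksp}$ summand furnishes a spectrum-level 2-local section; in fact the paper records precisely this observation as the closing remark of Section~\ref{sec:more}, \emph{after} proving the lemma geometrically. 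Your argument is more structural and yields the slightly stronger conclusion that $\Ahath$ is 2-locally a split epimorphism of spectra rather than merely degreewise on homotopy; the paper's argument has the advantage of exhibiting explicit manifold representatives for the section.

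Two small simplifications. First, the odd-prime and fracture-square steps are unnecessary: since $\pi_k\mathrm{ksp}$ is $0$, $\Z$, or $\Z_2$, the only nontrivial splitting problem occurs when $\pi_k\mathrm{ksp} = \Z_2$, and there a 2-local section already yields an integral one (an order-2 element of $(\pi_k\mathrm{MSpin^h})_{(2)}$ comes from an order-2 element of $\pi_k\mathrm{MSpin^h}$, as these are finitely generated abelian groups). Second, the claim that $\Ahath^*$ is nonzero on $H^0$ is cleanest via Hu's result that $\Ahath$ is surjective on $\pi_*$, hence an isomorphism on $\pi_0 = \Z$ and therefore on $H^0(-;\Z_2)$; calling the generator of $H^0\mathrm{ksp}$ a ``Thom class'' is slightly off since $\mathrm{ksp}$ is not a Thom spectrum.
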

\begin{proof}
Let $F \to \mathrm{MSpin^h} \xrightarrow{\Ahath} \mathrm{ksp}$ be the corresponding fiber sequence. Since $\Ahath$ is surjective on homotopy, there are short exact sequences
\[
0 \to \pi_k F \to \pi_k \mathrm{MSpin^h} \to \pi_k \mathrm{ksp} \to 0
\]
for all $k$.

When $k \neq 5,6$ mod 8, $\pi_k \mathrm{ksp}$ is either 0 or $\Z$, hence is projective, so the sequences split in this case. When $k$ is 5 or 6 mod 8, we have $\pi_k = \Z_2$. We will construct a right splitting $r: \pi_k \mathrm{ksp} \to \pi_k \mathrm{MSpin^h}$ in these cases.

Let $M_5 = \HP^1_+ \times S^1_\eta$, where $\HP^1_+$ is $\HP^1 \cong S^4$ with the \spinh structure described in 2.29 of \cite{Hu} and $S^1_\eta$ is $S^1$ with its non-bounding spin structure. Then $M_5$ is a \spinh 5-manifold. Now, as computed by Hu, $\Ahath$ sends $\HP^1_+$ to $1$ on $\pi_4$, so $\Ahath$ must send $M_5$ to the generator $\eta$ of $\pi_5 \mathrm{ksp}$, otherwise we contradict the fact that multiplication by $\eta$ is a surjection $\Z = \pi_4 \mathrm{ksp} \to \pi_5 \mathrm{ksp} = \Z_2$. Similarly, setting $M_6 = M_5 \times S^1_\eta$ we see that $\Ahath(M_6) \neq 0$; that is, $\Ahath(M_6)$ generates $\pi_6 \mathrm{ksp}$.

For $k=5,6$ define $r(u) = [M_k]$ where $u$ denotes a generator of $\pi_k \mathrm{ksp}$. Then $\Ahath \circ r (u) = \Ahath[M_k] = u$, so $\Ahath \circ r = 1_{\pi_k \mathrm{ksp}}$ and $r$ is indeed a right splitting.

The above forms a base case for an induction argument for the general case, which proceeds as follows.

Let $k \equiv 5,6$ mod 8 and let $M_k$ be a $k$-dimensional \spinh manifold with $\Ahath[M_k] \neq 0$. Then take $B$, a Bott manifold (a simply connected spin $8$-manifold with $\hat{A}$-genus equal to 1). Since $\Ahath$ is an $\mathrm{MSpin}$-module spectrum map over $\hat{\mathcal{A}}$, we have that $\Ahath[B \times M_k] = \hat{\mathcal{A}}([B])\Ahath([M_k]) = \beta \cdot u \neq 0$, where $\beta$ is the Bott element in $\pi_8 \mathrm{ko}$ and $u$ generates $\pi_k \mathrm{ksp}$.

Then, defining $r: \pi_{k+8} \mathrm{ksp} \to \pi_{k+8} \mathrm{MSpin^h}$ via $\beta \cdot u \mapsto [B \times M_i]$ gives a right splitting of the short exact sequence in degreee $k+8$. Thus the sequence splits for all $k \equiv 5,6$ mod $8$.
\end{proof}

With the decomposition provided in section \ref{sec:counting} one also sees that $\Ahath$ is the projection onto the bottom summand of $\mathrm{MSpin^h}$.

\bibliography{spinh}
\bibliographystyle{alpha}
%\nocite{*}

\end{document}